\documentclass[12pt,reqno]{amsart}

\addtolength{\textwidth}{4cm} \addtolength{\hoffset}{-2cm}
\addtolength{\marginparwidth}{-1cm} \addtolength{\textheight}{2cm}
\addtolength{\voffset}{-1cm}

\usepackage{times}
\usepackage[T1]{fontenc}
\usepackage{mathrsfs}
\usepackage{latexsym}
\usepackage[dvips]{graphics}
\usepackage[dvips]{graphicx}
\usepackage{epsfig}
\usepackage{amsmath,amsfonts,amsthm,amssymb,amscd}
\input amssym.def
\input amssym.tex
\usepackage{color}
\usepackage{nicefrac}

%%%%%%%%%%%%%%%%%%

\newcommand{\comment}[1]{}

\newcommand\be{\begin{equation}}
\newcommand\ee{\end{equation}}
\newcommand\bea{\begin{eqnarray}}
\newcommand\eea{\end{eqnarray}}
\newcommand\nbea{\begin{eqnarray*}}
\newcommand\neea{\end{eqnarray*}}
\newcommand\bi{\begin{itemize}}
\newcommand\ei{\end{itemize}}
\newcommand\ben{\begin{enumerate}}
\newcommand\een{\end{enumerate}}

%%%%%%%%%%%%%%%%

\newtheorem{thm}{Theorem}[section]

\newtheorem{lem}[thm]{Lemma}

\newtheorem{rek}[thm]{Remark}

%%%%%%%%%%%%%%

%%%%%%%%%%%%%

%%%%%%%%%%%%

\newcommand{\Z}{\ensuremath{\mathbb{Z}}}

  %use in linux

%%%%%%%%%%%%%

%%%%%%%%%%%%%

%%%%%%%%%%%%%

%%%%%%%%%%%%%

%%%%%%%%%%%%%

%%%%%%%%%%%%%

%%%%%%%%%%%%%

\numberwithin{equation}{section}

\newcommand{\bal}{\begin{align}}
\newcommand{\eal}{\end{align}}

\begin{document}

\title[Constructions of Generalized More Sums Than Differences Sets]{Explicit Constructions of Large Families of Generalized More Sums Than Differences Sets}

\author{Steven J. Miller, Luc Robinson and Sean Pegado}\email{sjm1@williams.edu (Steven.Miller.MC.96@aya.yale.edu), slr1@williams.edu, pegado.sean@gmail.com}
\address{Department of Mathematics and Statistics, Williams College,
Williamstown, MA 01267}

\subjclass[2010]{11P99 (primary), 11K99 (secondary)}

\keywords{sum-dominant sets, MSTD sets, $k$-generational sum-dominant sets}

\date{\today}

\thanks{We thank the participants of various CANT Conferences (especially Peter Hegarty, Mel Nathanson and Kevin O'Bryant) and the Number Theory and Probability Group of SMALL 2011 REU at Williams College for many enlightening conversations. The first and second named authors were partially supported by NSF grant DMS0970067; all three authors were partially supported by Williams College.}

\begin{abstract} A More Sums Than Differences (MSTD) set is a set of integers $A \subset \{0,\dots,n-1\}$ whose sumset $A+A$ is larger than its difference set $A-A$. While it is known that as $n\to\infty$ a positive percentage of subsets of $\{0,\dots,n-1\}$ are MSTD sets, the methods to prove this are probabilistic and do not yield nice, explicit constructions. Recently Miller, Orosz and Scheinerman \cite{MOS} gave explicit constructions of a large family of MSTD sets; though their density is less than a positive percentage, their family's density among subsets of $\{0,\dots,n-1\}$ is at least $C/n^4$ for some $C>0$, significantly larger than the previous constructions, which were on the order of $1/2^{n/2}$. We generalize their method and explicitly construct a large family of sets $A$ with $|A+A+A+A| > |(A+A)-(A+A)|$. The additional sums and differences allow us greater freedom than in \cite{MOS}, and we find that for any $\epsilon>0$ the density of such sets is at least $C/n^\epsilon$. In the course of constructing such sets we find that for any integer $k$ there is an $A$ such that $|A+A+A+A| - |A+A-A-A| = k$, and show that the minimum span of such a set is 30.
\end{abstract}

\maketitle

\tableofcontents

%%%%%%%%%%%%%%%%%%%%%%%%%%%%%%%%%%%%%%%%%%%%%%%%%%%%%%%%%%%%%%%%%%%%%%%%%%%%%%%%%%%%%%%%%%%%%%%%%%%%%%%%%%%%%%%%%%%%%%%%%%%%%%%%%%%%%%%%%%%%%%%%%%%%%%
%%%%%%%%%%%%%%%%%%%%%%%%%%%%%%%%%%%%%%%%%%%%%%%%%%%%%%%%%%%%%%%%%%%%%%%%%%%%%%%%%%%%%%%%%%%%%%%%%%%%%%%%%%%%%%%%%%%%%%%%%%%%%%%%%%%%%%%%%%%%%%%%%%%%%%
%%%%%%%%%%%%%%%%%%%%%%%%%%%%%%%%%%%%%%%%%%%%%%%%%%%%%%%%%%%%%%%%%%%%%%%%%%%%%%%%%%%%%%%%%%%%%%%%%%%%%%%%%%%%%%%%%%%%%%%%%%%%%%%%%%%%%%%%%%%%%%%%%%%%%%
\section{Introduction}

Many problems in number theory reduce to understanding the behavior of sums and differences of a set with itself, where for a set $A$ the sumset is \be A + A \ = \ \{x+y: x, y \in A\}\ee and the difference set is \be A - A \ = \ \{x-y: x, y \in A\};\ee if $A$ is finite we denote the number of elements of $A$ is denoted by $|A|$. Examples include the Goldbach problem (if $\mathcal{P}$ is the set of all primes, then $\mathcal{P}+\mathcal{P}$ contains all the even numbers), Waring's problem, the Twin Prime Conjecture (there are infinitely many representations of 2 in $\mathcal{P}-\mathcal{P}$), and Fermat's Last Theorem. In studying these additive problems, it is natural to compare $|A+A|$ and $|A-A|$. If the sumset is larger, we say $A$ is sum-dominant, or a More Sums Than Differences (MSTD) set. While such sets were known to exist (see \cite{He,Ma,Na1,Na2,Na3,Ru1,Ru2,Ru3}) it was thought that they were rare. Specifically, it was believed that as $n\to\infty$ the percentage of subsets of $\{0,\dots,n-1\}$ that were sum-dominant tends to zero. Martin and O'Bryant \cite{MO} recently disproved this, showing that a positive percentage of sets are sum-dominant. They showed the percentage is at least $2 \cdot 10^{-7}$, which was improved by Zhao \cite{Zh2} to at least $4.28 \cdot 10^{-4}$ (Monte Carlo simulations suggest that approximately $4.5 \cdot 10^{-4}$ percent are sum-dominant). See \cite{ILMZ1} for a survey of the field, where these and other results (such as those in \cite{HM1,HM2}, which deal with varying the probability measure on $\{0,\dots,n-1\}$) are given.

It is natural to ask whether or not there is an explicit construction of large families of MSTD sets. Unfortunately, the proofs in \cite{MO,Zh2} are probabilistic, and do not lend themselves to a clean enumeration of such sets. Miller, Orosz and Scheinerman \cite{MOS} gave an explicit construction of MSTD sets (shortly thereafter, Zhao \cite{Zh1} gave a new method as well). Previously the largest explicit families had, in the limit, at least $f(n) / 2^{n/2}$ of the $2^n$ subsets of $\{0,\dots,n-1\}$ being sum-dominant (with $f$ a nice polynomial). The construction in \cite{MOS} gives an explicit family of size at least $C_4 / n^4$, which was improved to $C_1/n$ in \cite{Zh1}. The purpose of this paper is to extend the method in \cite{MOS} to generalized MSTD sets. While our families will not be a positive percentage, we see in Theorem \ref{thm:mainepsilon} that we can preserve the simplicity of the construction but improve the result to missing by an arbitrarily small power.

Before explaining Miller, Orosz and Scheinerman's construction, we first set some notation.

\bi

\item We let $[a,b]$ denote all integers from $a$ to $b$; thus $[a,b] = \{n \in \Z: a \le n \le b\}$.\\

\item By $kA$ we mean $A$ added to itself $k-1$ times: \be kA \ = \ \underbrace{A+\cdots+A}_{\mbox{k times}}. \ee

\item We say a set of integers $A$ has the property $P_n$ (or is a $P_n$-set) if both its sumset and its difference set contain all but the first and last $n$ possible elements (and of course it may or may not contain some of these fringe elements).\footnote{It is not hard to show that for fixed $0<\alpha\le1$ a random set drawn from $[1,n]$ in the uniform model is a $P_{\lfloor \alpha n\rfloor}$-set with probability approaching $1$ as $n\to\infty$.\label{footnote:beingpn}} Explicitly, let $a=\min{A}$ and $b=\max{A}$. Then $A$ is a $P_n$-set if \bea\label{eq:beingPnsetsum} [2a+n,\ 2b-n]  \ \subset\  A+A \eea and \bea\label{eq:beingPnsetdiff} [-(b-a)+n,\ (b-a)-n]\ \subset\ A-A.\eea \ \\ \

\ei

%\nonumber\\ & \ = \ & L \cup [n+1,n+k] \cup (n+k+M) \cup [n+k+m+1,n+2k+m] \cup (n+2k+m+R), \eea

Essentially, their method is as follows (see \cite{MOS} for the full details). Let $A$ be an MSTD set, and write $A$ as $L \cup R$, where $L$ is the left fringe and $R$ the right fringe; for convenience, we assume $|L|=|R|=n = |A|/2$ and $1,2n\in A$. Let $O_k = [1,k] = \{1,\dots,k\}$, and for any $M$ of length $m$ set \bea A(M) & \ = \ & L \cup (n+O_k) \cup (n+k+M) \cup (n+k+m+O_k) \cup (n+ 2k+m+R), \eea where $a+S$ is the translate of $S$ by $a$. If $A$ is a $P_n$, MSTD set, then $A(M)$ is an MSTD set, so long as $M$ contains one out of every $k$ consecutive elements. The reason this is true is due to the fact that we have two intervals containing $k$ consecutive elements, and these guarantee that all possible sums are realized as $M$ never misses $k$ consecutive elements. This controls the middle of $A(M)+A(M)$; the fringes are controlled by $L$ and the translate of $R$. One way to ensure $M$ never misses $k$ consecutive elements is to divide $M$ into $m/(k/2)$ consecutive blocks of size $k/2$ (assume $\frac{k}{2}|m$), and note that each block may be any non-empty subset of a translate of $[1,k/2]$. The number of valid choices for all the blocks is \be \left(2^{k/2}-1\right)^{m/(k/2)}\ =\ 2^{m}\left(1 - \frac{1}{2^{k/2}}\right)^{m/(k/2)}; \ee the factor hitting $2^m$ measures how much we lose from our condition. There is also a loss from having two translates of $O_k$; we could have had $2^{2k}$ possible sets here, but instead have a fixed choice. Letting $r = 2n+2k+m$ and optimizing $m$ and $k$, Miller, Orosz and Scheinerman show their family has density at least $C/r^4$ for some $C>0$.

An essential ingredient in \cite{MOS} is the existence of one $P_n$, MSTD set $A$. While it is not hard to find such a set by brute force enumeration, this becomes tricker for the generalized problems we now consider. Instead of looking at $|A+A|$ versus $|A-A|$, one could study $|A+A+A|$ and $|A+A-A|$ or $|A+A+A+A|$ and $|A+A-A-A|$. While the methods of \cite{MOS} generalize to these (and additional) cases, the increased number of additions and subtractions provide opportunities that were not present in $A+A$ and $A-A$, and significantly larger families can be explicitly constructed once an initial set is found. For definiteness in this paper we mostly study sets with $|A+A+A+A| > |A+A-A-A|$, and we give an example where this holds. For general comparisons, Iyer, Lazarev, Miller and Zhang recently proved existence and positive percentage (see \cite{ILMZ1, ILMZ2} for the construction). Our main result is the following.

\begin{thm}\label{thm:mainepsilon}
For all $\epsilon>0$, there is a constant $C_\epsilon>0$ such that as $r$ goes to infinity, the percentage of subsets $A$  of $[1,r]$ with $|2A+2A| > |2A-2A|$ is at least $C_\epsilon/r^\epsilon$.
\end{thm}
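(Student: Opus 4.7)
The plan is to extend the Miller--Orosz--Scheinerman template from the pair $A+A$ vs.\ $A-A$ to the pair $2A+2A$ vs.\ $2A-2A$, exploiting the substantial additional representational freedom in four-element sums and differences. Following MOS, I would start from a ``seed'' base set $A_0$ of bounded width with $|2A_0+2A_0|>|2A_0-2A_0|$ and the $P_n$-type property that both $2A_0+2A_0$ and $2A_0-2A_0$ miss only their extremal fringes; the 30-span example alluded to in the abstract and built earlier in the paper suffices. Writing $A_0 = L \cup R$ with $L$ the left fringe and $R$ the right, and for parameters $n, k, m$ to be fixed later, I would then define
\begin{equation}
A(M)\ =\ L\ \cup\ (n+O_k)\ \cup\ (n+k+M)\ \cup\ (n+k+m+O_k)\ \cup\ (n+2k+m+R)
\end{equation}
for a variable middle $M \subseteq [1,m]$, and single out a large subfamily of admissible $M$'s.

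The technical heart is a structural lemma: if $M$ meets every window of length $b$ in $[1,m]$ for a suitable threshold $b$, then $2A(M)+2A(M)$ and $2A(M)-2A(M)$ both fill their entire middle ranges, so the surplus $|2A(M)+2A(M)|-|2A(M)-2A(M)|$ equals that of $A_0$ and is in particular positive. This is where the four-fold setting really pays off, and also where I expect the main difficulty: given $s$ in the middle range, one must exhibit representations $s = a_1+a_2+a_3+a_4$ and $s = a_1+a_2-a_3-a_4$ with $a_i\in A(M)$; because each slot may be drawn from one of the two length-$k$ anchors, from a fringe, or from $M$, fixing three of the four slots inside the anchors sweeps out translated intervals of length on the order of $3k$ covering the middle, so an $M$ that meets every window of length up to roughly $3k$ is enough. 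I would carry out this interval-sweep case analysis carefully for both the $++++$ and $++{-}{-}$ sign patterns, mirroring the MOS argument but with four slots to allocate rather than two.

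For the count, partitioning $[1,m]$ into $m/b$ consecutive blocks of length $b$ and insisting each be nonempty gives a clean lower bound of $(2^b-1)^{m/b} = 2^m(1-2^{-b})^{m/b}$ admissible middles, so with $r = 2n + 2k + m + O(1)$ the resulting family of sets in $[1,r]$ has density at least
\begin{equation}
\frac{2^m(1-2^{-b})^{m/b}}{2^r}\ =\ 2^{-O(n+k)}\,(1-2^{-b})^{m/b}.
\end{equation}
Choosing $n$ and $k$ to grow only logarithmically in $r$ keeps the prefactor polynomial in $1/r$; choosing $b$ also logarithmic in $r$ (but with a sufficiently large constant depending on $\epsilon$) makes $(1-2^{-b})^{m/b}$ exceed $r^{-\epsilon/2}$ for $m\le r$. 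Optimizing over $n$, $k$, $b$ and $m$ yields the claimed lower bound $C_\epsilon/r^\epsilon$ for any prescribed $\epsilon>0$, completing the proof modulo the structural lemma above, which is the main step where novelty is required.
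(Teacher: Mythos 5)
Your setup --- seed set, the $L \cup O_k \cup M \cup O_k \cup R$ template, the ``no long gaps in $M$'' lemma, and the block-counting --- is exactly the paper's Section 2, and it does yield a valid construction. The problem is the final optimization, which contains a genuine gap: you treat the gap threshold $b$ and the anchor length $k$ as independently tunable, but your own structural lemma ties them together ($b$ is on the order of $3k$, since the windows swept out by fixing three slots in the anchors have length $O(k)$). Concretely, to make $(1-2^{-b})^{m/b} \ge r^{-\epsilon/2}$ with $m$ comparable to $r$ you are forced to take $b \gtrsim \log_2 r - O(\log\log r)$, hence $k \gtrsim \frac{1}{3}\log_2 r$, and then the prefactor $2^{-2k}$ coming from the two \emph{forced full intervals} $O_k$ is at most $r^{-2/3}$-ish; there is no room to also make $k$ logarithmic ``with a small constant.'' Summing over $k$ (as in Lemma 3.1 of \cite{MOS}) rather than fixing it does not help: the exponent is $a/b$ with $a=2$ pinned by the cost $2^{-2k}$ of the two anchors, and with the paper's constants this gives $C/r^{4/3}$ (Theorem \ref{thm:firstgenMOS}), not $C_\epsilon/r^\epsilon$.

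The idea you are missing is the one that actually proves Theorem \ref{thm:mainepsilon}: the anchors themselves must be relaxed. The paper observes that in every sum and difference needed to verify the $P_n^4$ property, the $O$'s appear either in pairs or alongside $L+L+L$, $R'+R'+R'$, or $L+L-R'$, each of which already contains a long run of consecutive elements. Consequently $O_1,O_2$ need not be full intervals: it suffices that each contains a few forced fringe elements and has no run of $f$ consecutive missing elements (with short full blocks $F_1,F_2$ of constant length $f$ inserted next to $L$ and $R'$). This replaces the cost $2^{-2k}$ by $2^{-2\alpha k}$ with $\alpha = -\frac{2}{f}\log_2\frac{2^{f/2}-1}{2^{f/2}} \to 0$ as $f\to\infty$, so the exponent $a/b = 4\alpha$ can be driven below any prescribed $\epsilon$. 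Without some such reduction of the anchor cost, your argument caps out at a fixed positive power of $1/r$. (A minor additional point: requiring each block of length $b$ to be nonempty does not prevent gaps of length up to $2b-2$ straddling two blocks; you need blocks of length $b/2$, though this only affects constants.)
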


\begin{rek} It is worth noting that Theorem \ref{thm:mainepsilon} gives us a higher percentage family of generalized MSTD sets (with $|2A+2A| > |2A-2A|$) than MSTD sets. Our methods generalize to $|4A+4A| > |4A-4A|$ (among other comparisons). \end{rek}

% It seems plausible that a larger percentage of sets should be MSTD than generalized MSTD; under such an assumption our result would immediately yield good bounds for the percentage of MSTD sets.

In the course of proving Theorem \ref{thm:mainepsilon}, our tools immediately yield

\begin{thm}\label{thm:maindiffisx}
Given $x\in \mathbb{Z}$ there exists an $S_x$ with $|2S_x+2S_x|-|2S_x-2S_x|=x$.
\end{thm}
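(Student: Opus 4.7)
The plan is to produce explicit sets $S_x$ for every integer $x$ by exploiting the fringe flexibility of the sets built in the proof of Theorem~\ref{thm:mainepsilon}. In that construction, a set $A$ is written as $A = L \cup (\textup{dense middle block}) \cup R$, where the middle block is chosen dense enough that both $2A+2A$ and $2A-2A$ contain every value in their respective middle ranges. Consequently $|2A+2A|$ and $|2A-2A|$ each split as a common middle count plus fringe contributions that are governed purely by $L$ and $R$, so the balance $|2A+2A|-|2A-2A|$ depends only on the fringes.

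First I would exhibit two base sets: a set $B$ with known positive balance $c_+>0$ (guaranteed by the example underlying Theorem~\ref{thm:mainepsilon}), and a set $C$ with known negative balance $c_-<0$ (easy to produce since typical sets are difference-dominant in this generalized sense). Next I would show that a carefully chosen single-point perturbation at the extreme boundary of $L$ or $R$ shifts the balance by exactly $\pm 1$. The crucial structural fact is an asymmetry: the extreme left fringe of $2A+2A$ involves only $L$ and the extreme right fringe only $R$, but \emph{both} extreme fringes of $2A-2A$ involve $L$ and $R$ jointly. This gives enough degrees of freedom to add a new extremal element to $2A+2A$ without altering $|2A-2A|$, or vice versa.

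Third, iterating these $\pm 1$ shifts starting from $B$ and $C$ reaches every integer. For $|x|$ large, it may be faster to cascade by disjoint union: if $T$ is placed at a distance $N \gg \max A$, then every $4$-fold sum and difference in $A\cup(T+N)$ is classified by how many of its summands come from $T$, and the ``regions'' near multiples of $N$ do not interact. The balance of the enlarged set can therefore be expanded as a sum of terms involving the balances of $A$ and $T$ together with expressions in $|A|,|2A|,|3A|$ and their signed analogues; choosing $T$ from a small library of templates with known contributions lets us hit any target $x$ in a bounded number of steps.

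The main obstacle is verifying the $\pm 1$ claim rigorously: one must identify the exact extremal sum or difference gained or lost under the single-point perturbation, and confirm that every other potentially-affected position is shielded by the dense middle block. Once this step is established, iteration handles arbitrary $x\in\mathbb{Z}$, with negative $x$ handled either by symmetric perturbations starting from $C$ or by first reflecting the construction about $0$.
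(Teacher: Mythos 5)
Your overall strategy---two base examples, a unit perturbation, and a long-range ``cascading'' operation to reach large $|x|$---is the right shape of argument, but as written it has two genuine gaps, both of which you partly acknowledge. First, the $\pm 1$ perturbation claim is the entire content of the theorem for small adjustments, and you never exhibit the perturbation or verify it; the structural asymmetry you cite (extremes of $2A+2A$ governed by $L$ alone or $R$ alone, extremes of $2A-2A$ by $L$ and $R$ jointly) makes such a perturbation plausible but does not produce one. Worse, even granting a verified $\pm 1$ step, you cannot iterate it $|x|$ times from a fixed base set: each perturbation consumes room in a fringe of bounded size, so iteration alone cannot reach arbitrarily large $|x|$. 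You therefore genuinely need the cascading step, and that is the second gap: for $A \cup (T+N)$ with $N$ large, the $4$-fold sumset decomposes into pieces of the form $jA + (4-j)T$, so the balance involves \emph{mixed} sumsets and signed sumsets of $A$ with $T$, not just quantities attached to $A$ and $T$ separately. Showing that a ``small library of templates'' can hit every target requires actually computing these mixed terms, which you do not do.

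For comparison, the paper's proof is completely explicit at exactly the two places your sketch is not. Negative $x$ is handled in one stroke by the family $S_x = [1,|x|+2]\cup\{2|x|+3\}$, whose $4S_x$ and $2S_x-2S_x$ are written down in closed form. For positive $x$ the scaling mechanism is $S_{4k+1} = S_1 + \{0,137,274,\dots,137k\}$, where the modulus $137$ is chosen so that $4S_{4k+1}$ misses exactly one residue class per period and $2S_{4k+1}-2S_{4k+1}$ misses two, giving balance exactly $4k+1$; the remaining residues mod $4$ are then reached by deleting a single specified element ($137$, $34$, or $33$) and explicitly listing the finitely many sums and differences that disappear, which shift the balance by $-1$, $-7$, and $-2$ respectively. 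That is, the paper replaces your unverified $\pm 1$ iteration with a one-shot multiplication-by-$k$ construction plus three hand-checked single-element deletions. To complete your proof you would need to supply computations of exactly this kind.
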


In other words, we can construct these generalized MSTD sets such that we have arbitrarily more sums than differences.

In \S\ref{sec:constructingI} we go through (in full detail) the calculation needed to generalize \cite{MOS}, and obtain a lower bound for the probability of $C/r^{4/3}$. We improve this to $C'/r^\epsilon$ for any $\epsilon>0$ in \S\ref{sec:constructingII}, and then end in \S\ref{sec:givendiffs} by showing we can find sets such that the size of the generalized sumset is any desired number greater than (or less than) the generalized difference set. Not surprisingly, as the bounds for the density of these generalized MSTD sets improve, our constructions become more complicated; this is why we provide full details and a description of the method for the weaker results.

%%%%%%%%%%%%%%%%%%%%%%%%%%%%%%%%%%%%%%%%%%%%%%%%%%%%%%%%%%%%%%%%%%%%%%%%%%%%%%%%%%%%%%%%%%%%%%%%%%%%%%%%%%%%%%%%%%%%%%%%%%%%%%%%%%%%%%%%%%%%%%%%%%%%%%
%%%%%%%%%%%%%%%%%%%%%%%%%%%%%%%%%%%%%%%%%%%%%%%%%%%%%%%%%%%%%%%%%%%%%%%%%%%%%%%%%%%%%%%%%%%%%%%%%%%%%%%%%%%%%%%%%%%%%%%%%%%%%%%%%%%%%%%%%%%%%%%%%%%%%%
%%%%%%%%%%%%%%%%%%%%%%%%%%%%%%%%%%%%%%%%%%%%%%%%%%%%%%%%%%%%%%%%%%%%%%%%%%%%%%%%%%%%%%%%%%%%%%%%%%%%%%%%%%%%%%%%%%%%%%%%%%%%%%%%%%%%%%%%%%%%%%%%%%%%%%
\section{Constructing many $A$ with $|2A+2A|>|2A-2A|$, I}\label{sec:constructingI}
%\subsection{}

In this section we generalize the construction in \cite{MOS}; we greatly improve the percentage in the next section. Here we prove

\begin{thm}\label{thm:firstgenMOS}
There is a constant $C>0$ such that as $r$ goes to infinity, the percentage of subsets of $[1,r]$ with $|2A+2A| > |2A-2A|$ is at least $C/r^{4/3}$.
\end{thm}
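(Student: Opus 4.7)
The plan is to mimic the Miller--Orosz--Scheinerman construction of \cite{MOS} in the 4-fold setting, exploiting the extra flexibility offered by summing and differencing four elements instead of two. The first step is to exhibit a single base set $A_0 \subseteq [1, 2n]$ with $\{1, 2n\} \subseteq A_0$ satisfying $|2A_0 + 2A_0| > |2A_0 - 2A_0|$ together with the 4-fold analogue of the $P_n$ property, namely that both $2A_0 + 2A_0$ and $2A_0 - 2A_0$ cover every non-fringe element of their respective ambient ranges; a small computer search yields such an $A_0$. Write $A_0 = L \cup R$ for its left and right halves.

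For $k \mid m$ and $M \subseteq [1,m]$, define
\[
A(M) \ := \ L \,\cup\, (n+O_k) \,\cup\, (n+k+M) \,\cup\, (n+k+m+O_k) \,\cup\, (n+2k+m+R),
\]
so $A(M)\subseteq[1,r]$ with $r = 2n+2k+m$. The central claim is that if $M$ meets every length-$3k$ window inside $[1, m]$, then both $2A(M)+2A(M)$ and $2A(M)-2A(M)$ have fully realized middles. The reason --- and the source of the 4-fold improvement over \cite{MOS} --- is that in a 4-fold sum or alternating sum one may draw three of the four summands from the interval blocks $O_k$ (using either flanking block, with repetition allowed). The threefold combinations $O_k+O_k+O_k = [3, 3k]$ and $O_k+O_k-O_k = [2-k, 2k-1]$ are each intervals of length $\asymp 3k$, so pairing such a triple with one element $\mu \in M$ sweeps out a window of length $\asymp 3k$ around the image of $\mu$; hence the gap requirement on $M$ relaxes from ``no gap of length $k$'' (as in \cite{MOS}) to ``no gap of length $3k$''. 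A case analysis over the different ways to distribute summands among the five pieces of $A(M)$ then shows the middle is covered, and the fringes are controlled by $L$ and $R$ exactly as in $A_0$, giving $|2A(M)+2A(M)| - |2A(M)-2A(M)| = |2A_0+2A_0| - |2A_0-2A_0| > 0$.

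To count valid $M$, partition $[1,m]$ into $2m/(3k)$ consecutive blocks of length $3k/2$ and demand $M$ be nonempty in each; this gives $(2^{3k/2}-1)^{2m/(3k)} = 2^m(1-2^{-3k/2})^{2m/(3k)}$ choices. Allowing also $L$ and $R$ to vary over fringe patterns preserving the $P_n$-type property of $A_0$ (a positive-probability condition by Footnote~\ref{footnote:beingpn}), and dividing the total by $2^r$, the resulting density is at least
\[
\frac{1}{2^{2k}}\exp\!\left(-\frac{2m}{3k\cdot 2^{3k/2}}\right).
\]
Choosing $k \approx \tfrac{2}{3}\log_2 r$ makes the exponential factor $\Theta(1)$, leaving $2^{-2k} \asymp r^{-4/3}$, which is the advertised bound.

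The main obstacle is the middle-filling verification: one must enumerate every way that a middle element of $2A(M) \pm 2A(M)$ can be realized, check that the length-$3k$ gap bound on $M$ suffices in each case, and confirm that the fringes are truly independent of $M$. The final exponent $4/3$ is tied directly to the factor $3$ in the allowed gap length, so any slack in this combinatorial step would degrade the resulting exponent.
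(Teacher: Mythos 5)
Your proposal follows essentially the same route as the paper: the same five-piece construction $L \cup O_1 \cup M \cup O_2 \cup R'$, the same key observation that $O_k+O_k+O_k$ (resp.\ $O_k+O_k-O_k$) is an interval of length about $3k$ so that $M$ need only avoid gaps of length roughly $3k$ rather than $k$, and the same count yielding the exponent $2/(3/2)=4/3$ (the paper sums over $k$ and quotes Lemma~3.1 of \cite{MOS} rather than optimizing a single $k$, but that is cosmetic). One small caution: keep $L$ and $R$ fixed rather than letting them vary, since a fringe pattern can preserve the $P_n$-type property while destroying the inequality $|2A_0+2A_0|>|2A_0-2A_0|$; fixing them costs only a constant factor and is what the paper does.
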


We first describe our search for one set with the desired properties (as our approach may be of use in finding sets needed for other problems), then discuss some lemmas needed to generalize Miller, Orosz and Scheinerman's construction.

We started by searching for a single set with $|A+A+A+A |>|A+A-A-A|$; from now on we use the notation $4A$ to denote $A+A+A+A$ and $2A-2A$ to denote $A+A-A-A$. We generated random subsets of $[1,40]$, including each number with probability $1/4$, and checked if the generated sets had our desired property. We quickly found  $A = \{6, 7, 9, 10, 13, 32, 35, 36, 38, 39, 40\}$, which has $|A+A+A+A|=136$ and $|A+A-A-A|=135$.

%We started by searching for a single set with $|A+A+A+A |>|A+A-A-A|$; from now on we use the notation $4A$ to denote $A+A+A+A$ and $2A-2A$ to denote $A+A-A-A$. We generated random subsets of $[1,40]$, including each number with probability $1/4$, and checked if the generated sets had our desired property. It took approximately 2 million trials to find $\{6, 7, 9, 10, 13, 32, 35, 36, 38, 39, 40\}$, which has $|A+A+A+A|=136$ and $|A+A-A-A|=135$. After some improvements to our algorithm, a new search returned seven sets with $|4A|>|2A-2A|$ out of 100 million trials.

In order to construct an infinite family from one set $A$ using the techniques of \cite{MOS}, $A$ must satisfy two properties:

\begin{itemize}

\item The set $A$ must be a subset of $[1,2n]$ containing $1$ and $2n$.

\item The set $A$ must be a $P^4_n$ set; meaning that $4A$ and $2A-2A$ contain at least all but the first and last $n$ possible elements.

\end{itemize}

While we can subtract 5 from each element in our set, to have it start at 1 without affecting the number of sums and differences, it then ranges from 1 to 35 and 35 is not even. Though we could restructure their construction to avoid needing the first condition, our set does not meet the second condition either.
We then looked for further ways to modify our set, hoping to find a set that had $|4A |>|2A-2A|$ and meet their second condition. By taking our set and adding it to $\{0,49\}$ (that is repeating each element shifted by 49)\footnote{While it is expected that $A + \{0,a\}, a>4(34)=136$ would still have $|4A|>|2A-2A|$ (since the two repetitions of A would never interact) it is surprising that $A + \{0,a\}$ still has $|4A |>|2A-2A|$ for many smaller values of $a$. Investigating this might lead to some insight into the structure of sets with $|4A|>|2A-2A|$.}, we found our desired set. With $n=42$,
\be A\ = \ \{1, 2, 4, 5, 8, 27, 30, 31, 33, 34, 35,50,51,53,54,57,76,79,80,82,83,84\}\ \subset\ [1,2n], \ee $1, 2n \in A$, and
\be 4A\ = \ [4,336] \backslash \{27\} \supset [n+4, 7n], \ \ \ 2A-2A \ =\ [-166,166] \backslash \{141,-141\} \supset [-3n+1,3n]. \ee
This set thus meets all of the required properties to use a modified version of Miller et. al's construction of an infinite family of sets with $|4A |>|2A-2A|$. To do so, we first need to prove two lemmas, similar to their Lemma 2.1 and Lemma 2.2.

\begin{lem}\label{lem:lemma1} Let $A = L \cup R$ be a $P^4_n$ set where $L \subset [1,n]$ and $ R\subset [n+1,2n]$. Form $A' = L\cup M \cup R' $ where $M\subset [n+1,n+m]$ and $R' = R+m$. If $A'$ is a $P^4_n$ set then $|4A'|-|4A|=|2A'-2A' |-|2A-2A|=4m$ and thus if $|4A|>|2A-2A|$ then $|4A'|>|2A'-2A'|$. \end{lem}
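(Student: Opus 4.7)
My plan is to decompose each of $4A$, $4A'$, $2A-2A$, and $2A'-2A'$ into a low fringe of potential length $n$, a middle, and a high fringe of potential length $n$, and then to argue separately that (i) the middles contribute a size difference of exactly $4m$, and (ii) the fringes contribute no change. Together this gives both claimed equalities, and then the final implication is immediate.

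First I would record the extremes: since $1 \in L$ and $2n \in R$, we have $\min A = \min A' = 1$, $\max A = 2n$, and $\max A' = 2n+m$. Thus the potential window for $4A$ is $[4, 8n]$ and for $4A'$ is $[4, 8n+4m]$. The $P^4_n$ hypotheses directly give $[n+4, 7n] \subset 4A$ and $[n+4, 7n+4m] \subset 4A'$, so the middles are entirely present and their sizes differ by exactly $4m$.

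Next I would pin down the fringes. For the low fringe $[4, n+3]$ of $4A'$, any single summand drawn from $M \cup R'$ is at least $n+1$, so a sum using at least one such summand is at least $(n+1)+3 \geq n+4$, exceeding $n+3$. Hence the low fringe of $4A'$ is cut out entirely by $4L$, and coincides with the low fringe of $4A$. A symmetric upper-bound computation shows that any sum using at least one element of $L \cup M$ is at most $(n+m) + 3(2n+m) = 7n+4m$, which lies below the high fringe $[7n+4m+1,\, 8n+4m]$ of $4A'$; so this high fringe is contained in $4R' = 4R+4m$, i.e.\ it is the translate by $4m$ of the high fringe of $4A$ and therefore has the same cardinality. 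Summing the three contributions yields $|4A'| - |4A| = 4m$.

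The analogous argument handles the difference sets. The potential windows are $[-(4n-2),\, 4n-2]$ and $[-(4n+2m-2),\, 4n+2m-2]$, the $P^4_n$ property fills the middles (which again differ in size by $4m$), and at either extreme a short computation shows that a difference $(a_1+a_2)-(a_3+a_4)$ lying in the fringe forces $a_1,a_2$ into the ``far'' piece ($R'$ at the positive extreme, $L$ at the negative extreme) and $a_3,a_4$ into the ``near'' piece; any mixed contribution drops the total below the fringe threshold. The positive fringe of $2A'-2A'$ is then the translate by $2m$ of that of $2A-2A$, the negative fringe is translated by $-2m$, and cardinalities are preserved, giving $|2A'-2A'| - |2A-2A| = 4m$. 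Subtracting the two identities yields the claim. The main obstacle is purely the fringe bookkeeping: one must verify in each case that the width-$n$ threshold strictly excludes every mixed summand, via elementary inequalities such as $(n+1)+3 > n+3$ and $(n+m) + 3(2n+m) < 7n+4m+1$ (and their two analogues for the difference set). Once these are in hand, the arithmetic assembles the result.
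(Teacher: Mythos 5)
Your proposal is correct and follows essentially the same route as the paper's proof: both decompose $4A$, $4A'$, $2A-2A$, $2A'-2A'$ into fringes of width $n$ plus a middle, use the $P^4_n$ hypothesis to see the middles are full intervals differing in size by $4m$, and observe that the fringes are produced entirely by $L$ (resp.\ $R$, $R'$) and hence match up to translation. You simply spell out the elementary inequalities behind the fringe identification that the paper leaves implicit.
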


The utility of this lemma is that if $A$ were also a generalized MSTD set (with $|4A| > |2A-2A|$), then $A'$ would be a generalized MSTD set as well.

\begin{proof}
We first consider the number of added sums. Just as in \cite{MOS}, in the interval $[4,n+3]$, $4A$ and $4A'$ are identical as all elements come from $L+L+L+L$. Also, we can pair the elements of $4A$ in the interval $[7n+1,8n]$ with the elements of $4A'$ in the interval $[7n+1+4m,8n+4m]$. Since both $A$ and $A'$ are $P^4_n$ sets, we know they each contain all possible elements more than $n$ from their boundaries. Having accounted for the sums within $n$ of the boundaries, $|4A'|-|4A| = (7n+4m+1)-(7n+1)=4m$.

Now consider the differences in the same way. Again, the elements within $n$ of the boundaries of $2A-2A$ and $2A'-2A'$ can be paired and both contain all elements that are not within $n$ of the boundaries (since they are  $P^4_n$ sets). The filled middle interval in $2A-2A$ is $[-3n+2, 3n-2]$ and in $2A'-2A'$ is $[-3n-2m+2, 3n+2m-2]$. Thus $|2A'-2A'|-|2A-2A|=4m$ as desired.
\end{proof}

\begin{lem}\label{lem:lemma2} Let $A = L \cup R$ be a $P^4_n$ set where $L \subset [1,n]$ and $R\subset [n+1,2n]$ and $\{1,2n\} \in A$. Form $A' = L \cup O_1 \cup M \cup O_2 \cup R $ with $O_1 = [n+1,n+k]$, $M\subset[n+k+1,n+k+m]$,
$O_2=[n+k+m+1,n+2k+m]$ and $R'=R+2k+m$. If $k\ge n$ and $M$ has no run of $3k-2$ missing elements then $A'$ is a $P_n^4$ set.
\end{lem}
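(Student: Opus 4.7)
The plan is to verify the two containments that make $A'$ a $P_n^4$-set:
\begin{align*}
4A' & \supset [n+4,\ 7n+8k+4m], \\
2A'-2A' & \supset [-(3n+4k+2m-2),\ 3n+4k+2m-2],
\end{align*}
writing $S := O_1\cup M\cup O_2\subset [n+1,\,n+2k+m]$, so that $A' = L \cup S \cup R'$. The strategy in each case is to split the target interval into a deep middle (handled by sums or differences involving only $S$) and two outer fringes (handled by combinations that bring in $L$ or $R'$), using essentially that $O_1$ and $O_2$ are full intervals of length $k\geq n$ together with the no-long-run hypothesis on $M$.

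For the deep middle of $4A'$, I first observe that the five Minkowski sums $(4-j)O_1+jO_2$ for $j=0,\ldots,4$ are full intervals of length $4k-3$, separated (when $m\geq 3k-2$) by gaps of width $m-3k+3$. The key claim is that $(3-j)O_1+jO_2+M$ fills the $j$-th gap: for $z$ in that gap, the translate $z-\bigl((3-j)O_1+jO_2\bigr)$ is an interval of length exactly $3k-2$ lying inside $[n+k+1,\,n+k+m]$, and thus meets $M$ by hypothesis. This yields $[4n+4,\ 4n+8k+4m]\subset 4A'$. For the low fringe $[n+4,\ 4n+3]$, I use only $1\in L$ together with $O_1$: the sums $3\cdot 1+O_1$, $2\cdot 1+2O_1$, $1+3O_1$, $4O_1$ equal the intervals $[n+4,\,n+k+3]$, $[2n+4,\,2n+2k+2]$, $[3n+4,\,3n+3k+1]$, $[4n+4,\,4n+4k]$; the hypothesis $k\geq n$ makes these four intervals chain continuously to cover $[n+4,\ 4n+4k]$, subsuming the low fringe and connecting to the first deep-middle interval. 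The high fringe $[4n+8k+4m+1,\ 7n+8k+4m]$ is handled symmetrically by $2n+2k+m\in R'$ together with $O_2$.

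For the difference condition, the structure is entirely parallel. Since $2A'-2A'$ is symmetric about $0$, I may restrict attention to $[0,\ 3n+4k+2m-2]$. The deep middle arising from $2S-2S$ decomposes into interval pieces of the form $O_iO_j - O_pO_q$ separated by gaps; the gaps are filled by differences carrying an $M$ term (such as $O_iO_j - O_p - M$), because the relevant translate again has length $3k-2$ and sits inside the $M$-domain $[n+k+1,\,n+k+m]$. Outside the pure-$S$ contribution, the differences $2L-2L$, $L+R'-L-R'$, and $2R'-2R'=2R-2R$ are identical to the corresponding differences in $2A-2A$, so the $P_n^4$ hypothesis on $A$ controls the outermost slice; mixed differences such as $L+O_2-2R'$ and $R'+O_1-2L$ bridge the intermediate range, with their endpoint inequalities again reducing to $k\geq n$.

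The main obstacle is executing Step 1 carefully: for each of the four sumset gaps one must check that every translate $z-\bigl((3-j)O_1+jO_2\bigr)$ for $z$ in the gap lies inside $[n+k+1,\,n+k+m]$ and has length exactly $3k-2$, so that the no-long-run hypothesis applies at full strength. The endpoint $z$-values of each gap force the translate to coincide with either $[n+k+1,\,n+4k-2]$ or $[n+k+m-3k+3,\,n+k+m]$, which is exactly where the constant $3k-2$ is tight; the analogous endpoint analysis must be redone for the difference set, where additional care is needed because each gap can be filled in two symmetric ways and one needs to identify the correct $(O\text{-triple})+M$ or $(O\text{-triple})-M$ term.
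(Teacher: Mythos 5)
Your treatment of $4A'$ is essentially the paper's argument: the chain of full intervals $(4-j)O_1+jO_2$, the gap-filling by $(3-j)O_1+jO_2+M$ via translates of length $3k-2$, and the fringes generated from $1\in L$ and $2n+2k+m\in R'$ with contiguity coming from $k\ge n$ all match what the paper does. The difference set is where your sketch breaks down, and the problem is not merely omitted detail: you have the geometry of $2A'-2A'$ inverted. The families $L+L-L-L$, $L+R'-L-R'=L+R-L-R$ and $R'+R'-R'-R'=R+R-R-R$ are all centered at $0$ and bounded in absolute value by $3n-2$; they sit in the \emph{innermost} part of the target interval $[-(3n+4k+2m-2),\,3n+4k+2m-2]$ (a region already handled by $2S-2S$), not in the ``outermost slice.'' The extreme values $\pm(3n+4k+2m-2)$ are attained only by $L+L-R'-O_2$ and its reflection $R'+O_2-L-L$, a combination absent from your list: of your named bridges, $L+O_2-R'-R'$ is entirely negative, and $R'+O_1-L-L$ tops out at $3n+3k+m-2$, leaving an uncovered stretch of length $k+m$ just below the right endpoint.

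Separately, the appeal to the $P_n^4$ hypothesis on $A$ cannot be made to work in the way you describe. An element of $2A-2A$ guaranteed by that hypothesis may be realized as, say, $r_1+r_2-\ell-r_3$ rather than $r_1+r_2-\ell_1-\ell_2$, and under $R\mapsto R'=R+2k+m$ these two representations shift by \emph{different} multiples of $2k+m$; hence the guaranteed interval $[-3n+2,\,3n-2]\subset 2A-2A$ does not transport to any single interval of $2A'-2A'$. The paper sidesteps this entirely: its proof of this lemma never invokes the $P_n^4$ property of $A$, but instead covers all of $[2-3n-4k-2m,\,0]$ (and then reflects through $0$) by an explicit contiguous chain $L+L-R'-O_2$, $L+L-O_2-O_2$, $L+O_1-O_2-O_2$, $O_1+O_1-O_2-O_2$, \dots, $O_2+O_2-O_2-O_2$, interleaved with $M$-terms such as $M+O_1-O_2-O_2$, using only $1\in L$, $2n+2k+m\in R'$, the fullness of the $O_i$, the hypothesis $k\ge n$, and the run condition on $M$. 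You need to redo the difference-set half along these lines.
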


\begin{proof}
We need to show that $4A' \supset [n+4, 7n+8k+4m] $ and $2A' - 2A' \supset [-3n-4k-2m+2,3n+4k+2m-2]$ (because $4A' \subset [4, 8n+8k+4m]$ and $2A'-2A' \subset [-4n-4k-2m+2,4n+4k+2m-2]$).

First consider $4A'$. Since $1 \in L$,
\bea L+L+L+O_1 &\ \supset\ & [4+n, 3+n+k]\nonumber \\
L+L+O_1+O_1 &\supset& [4+2n, 2+2n+2k] \nonumber\\
L+O_1+O_1+O_1& \supset& [4+3n, 1+3n+3k]. \eea

Further, since $2n \in R$,
\bea O_2+O_2+O_2+R'&\ \supset\ & [5n+5k+4m+3,5n+8k+4m]\nonumber \\
O_2+O_2+R'+R'& \supset & [6n+6k+4m+2,6n+8k+4m]\nonumber \\
O_2+R'+R'+R' &\supset & [7n+7k+4m+1,7n+8k+4m]. \eea

We now consider the sums of the $O_i$'s. We have \bea
O_1+O_1+O_1+O_1 & \ \supset\ & [4+4n, 4n+4k]\nonumber \\
O_1+O_1+O_1+O_2 &\supset & [4+4n+k+m, 4n+5k+m]\nonumber \\
O_1+O_1+O_2+O_2 &\supset &[4+4n+2k+2m, 4n+6k+2m]\nonumber \\
O_1+O_2+O_2+O_2 &\supset &[4+4n+3k+3m, 4n+7k+3m]\nonumber \\
O_2+O_2+O_2+O_2 &\supset & [4+4n+4k+4m, 4n+8k+4m].\eea

Finally, we study the sums involving $M$. We find
\be O_1+O_1+O_1+M\ =\ (O_1+O_1+O_1)+M\ =\ [3n+3,3n+3k] + M \supset [4n + 3k+1, 4n+k+m+3].\ee
This is because the smallest element in $M$ must be at most $n+3k-2$ and the largest element in $M$ is at least $m+n-2k+3$ (setting the bounds) and, because $M$ has no runs of $3k-2$ missing elements and $3O_1$ has $3k-2$ consecutive elements (closing the gaps). Similarly, \bea
O_1+O_1+O_2+M&\ \supset\ & [4n + 4k+m+1, 4n+2k+2m+3]\nonumber \\
O_1+O_2+O_2+M& \supset& [4n + 5k+2m+1, 4n+3k+3m+3]\nonumber \\
O_2+O_2+O_2+M& \supset& [4n + 6k+3m+1, 4n+4k+4m+3].\eea

Assembling these sums in the following order, and noting that the sums are contiguous, we get our desired result. \bea
L+L+L+O_1 & \ \supset \ & [4+n, 3+n+k]\nonumber\\
L+L+O_1+O_1 & \ \supset \ & [4+2n, 2+2n+2k]\nonumber\\
L+O_1+O_1+O_1 & \ \supset \ & [4+3n, 1+3n+3k]\nonumber\\
O_1+O_1+O_1+O_1 & \supset & [4+4n, 4n+4k]\nonumber\\
O_1+O_1+O_1+M & \supset & [4n + 3k+1, 4n+k+m+3]\nonumber\\
O_1+O_1+O_1+O_2 & \ \supset \ & [4+4n+k+m, 4n+5k+m] \nonumber\\
O_1+O_1+O_2+M & \ \supset \ & [4n + 4k+m+1, 4n+2k+2m+3]\nonumber\\
O_1+O_1+O_2+O_2 & \ \supset \ & [4+4n+2k+2m, 4n+6k+2m]\nonumber\\
O_1+O_2+O_2+M & \ \supset \ & [4n + 5k+2m+1, 4n+3k+3m+3]\nonumber\\
O_1+O_2+O_2+O_2 & \ \supset \ & [4+4n+3k+3m, 4n+7k+3m]\nonumber\\
O_2+O_2+O_2+M & \ \supset \ & [4n + 6k+3m+1, 4n+4k+4m+3] \nonumber\\
O_2+O_2+O_2+O_2 & \ \supset \ & [4+4n+4k+4m, 4n+8k+4m]\nonumber\\
O_2+O_2+O_2+R' & \ \supset \ & [5n+5k+4m+3,5n+8k+4m]\nonumber\\
O_2+O_2+R'+R' & \ \supset \ & [6n+6k+4m+2,6n+8k+4m]\nonumber\\
O_2+R'+R'+R' & \ \supset \ & [7n+7k+4m+1,7n+8k+4m]. \eea
Therefore $4A' \supset [4+n,7n+8k+4m]$.

Now consider $2A'-2A'$. Assembling the following sums (using the same logic concerning M): \bea
L+L-R'-O_2 & \ \supset \ & [2-3n-4k-2m, 1-3n-3k-m] \nonumber\\
L+L-O_2-O_2 & \ \supset \ & [2-2n-4k-2m,-2n-2k-2m] \nonumber\\
L+O_1-O_2-O_2 & \ \supset \ & [2-n-4k-2m, -1-m-k-2m] \nonumber\\
O_1+O_1-O_2-O_2 & \supset & [2-4k-2m, -2-2m] \nonumber\\
M+O_1-O_2-O_2 & \ \supset \ & [-1-k-2m,1-3k-m]\nonumber\\
O_1+O_2-O_2-O_2 & \supset & [2-3k-m,k-m-2]\nonumber\\
M+O_2-O_2-O_2 & \ \supset \ & [-1-m,1-2k]\nonumber\\
O_2+O_2-O_2-O_2 & \ \supset \ & [2-2k,-2+2k].\eea

Since these regions are all contiguous, $2A'-2A' \supset [2-3n-4k-2m, 0]$. Since $2A'-2A'$ must be symmetric about 0, $2A'-2A' \supset [2-3n-4k-2m, -2+3n+4k+2m]$ as desired. Therefore $A'$ is a $P_n^4$-set.
\end{proof}

Using these lemmas, we can now prove Theorem \ref{thm:firstgenMOS}.

\begin{proof}[Proof of Theorem \ref{thm:firstgenMOS}]
Just as in the proof in \cite{MOS}, we need to count the number of sets $M$ of the form $O_1 \cup M \cup O_2$ of width $r=2k+m$ which may be inserted into a $P^4_n$-set $A$ with $|4A|>|2A-2A|$. We are counting the exact same sets as in \cite{MOS}, except for them there $M$ could not contain any run of $k$ consecutive elements whereas ours cannot contain any run of $3k-2$ missing elements. They could ensure their condition was satisfied by requiring each block of $k/2$ must contain at least one element; the analogous condition for us is that each block of size $\frac{3k}{2} -1$ must contain at least one element. We can ignore the minus 1, since it will not matter as $r$ gets large.

Following the same logic as in \cite{MOS}, we end up needing the asymptotic behavior of the sum \be \sum_{k=n}^{r/4} \frac{1}{2^{2k}} \left(1-\frac{1}{2^{3k/2}}\right)^\frac{r}{{3k}/2}.\ee Note the factors of $1/2^{2k}$ arise from taking sets $O_i$ that are $k$ consecutive elements, and the factor $(1-\frac{1}{2^{3k/2}})$ is due to our condition of $M$ having at least one element in blocks of size $3k/2$.

Fortunately, in anticipation of this work, \cite{MOS} analyzed the more general sum \be \sum_{k=n}^{r/4} \frac{1}{2^{ak}} \left(1-\frac{1}{2^{bk}}\right)^{r/ck},\ee showing there is a constant $C>0$ such that it is at least $C/r^{a/b}$ (see their Lemma 3.1). Our sum is of the same form with parameters $a=2$, $b=c=3/2$, and thus our sum is at least $1/r^{4/3}$.
\end{proof}

\begin{rek}\label{rek:schilling} Our density bound above is related to the bounds from \cite{MOS} for sum-dominant sets, and an improvement there translates to an improvement here. We describe a simple improvement one can make to the arguments in \cite{MOS}, which allows us to replace the $1/r^4$ they obtained for sum-dominant sets with a $1/r^2$. While a similar analysis would improve our results here, we choose not to do so as the real improvement comes from a better choice of the $O$'s (described in the next section) and not the middle.

We appeal to an analysis of the probability $m$ consecutive tosses of a fair coin has its longest streak of consecutive heads of length $\ell$ (see \cite{Sc}). While the expected value of $\ell$ grows like $\log_2(m/2)$, the variance converges to a quantity independent of $m$, implying an incredibly tight concentration. If we take $O_1$ and $O_2$ as before and of length $k$, we may take a positive percentage of all $M$'s of length $m$ to insert in the middle, so long as $k = \log_2(m/2) - c$ for some $c$. The size of $A$ is negligible; the set has length essentially $r = m+2k$. Of the $2^{m+2k}$ possible middles to insert, there are $C 2^m$ possibilities (we have a positive percentage of $M$ work, but the two $O$'s are completely forced upon us). This gives a percentage on the order of $2^m / 2^{m+2k}$; as $k=\log_2(m/2)-c$, this gives on the order of $1/r^2$ as a lower bound for the percentage of sum-dominated sets, much better than the previous $1/r^4$.
\end{rek}

%%%%%%%%%%%%%%%%%%%%%%%%%%%%%%%%%%%%%%%%%%%%%%%%%%%%%%%%%%%%%%%%%%%%%%%%%%%%%%%%%%%%%%%%%%%%%%%%%%%%%%%%%%%%%%%%%%%%%%%%%%%%%%%%%%%%%%%%%%%%%%%%%%%%%%
%%%%%%%%%%%%%%%%%%%%%%%%%%%%%%%%%%%%%%%%%%%%%%%%%%%%%%%%%%%%%%%%%%%%%%%%%%%%%%%%%%%%%%%%%%%%%%%%%%%%%%%%%%%%%%%%%%%%%%%%%%%%%%%%%%%%%%%%%%%%%%%%%%%%%%
%%%%%%%%%%%%%%%%%%%%%%%%%%%%%%%%%%%%%%%%%%%%%%%%%%%%%%%%%%%%%%%%%%%%%%%%%%%%%%%%%%%%%%%%%%%%%%%%%%%%%%%%%%%%%%%%%%%%%%%%%%%%%%%%%%%%%%%%%%%%%%%%%%%%%%
\section{Constructing many $A$ with $|2A+2A|>|2A-2A|$, II}\label{sec:constructingII}

We discuss improvements to the exponent in Theorem \ref{thm:firstgenMOS}. The following two observations are very important in improving our exponent.

\begin{itemize}

 \item The $O$'s always show up at least in pairs in the sums and differences used to prove $A'$ was a $P_n^4$-set, except in cases where they show up with $L+L+L$, $R'+R'+R'$ or $L+L-R'$.
 \item Each of $L+L+L$, $R'+R'+R'$ and $L+L-R'$ contain a run of 16 elements in a row.

\end{itemize}

These two points allow us to relax our structure for each of the $O$'s and still have all of the sums and differences just stated fill the necessary ranges. This greatly improves our exponent, as we lost a power due to the $1/2^{2k}$ factor from the $O$'s. As long as each $O$ contains its first and last possible element, each $O$ has no run of 16 missing elements and $2O=O+O$ is full for both $O$'s, $A'$ will be a $P_n^4$-set. This looser structure allows us to replace the $1/2^{2k}$ with a much better factor and thus greatly improve our density bound.

\begin{thm}\label{thm:run16approxf}
There is a constant $C>0$ such that as $r$ goes to infinity, the percentage of subsets of $[1,r]$ with $|4A| > |2A-2A|$ is at least $C/n^r$, where $r = \frac16 \log_2(256/255) \approx .001$.
\end{thm}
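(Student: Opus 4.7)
The plan is to rerun the density argument of Section \ref{sec:constructingI}, but with a much larger pool of admissible ``end'' sets $O_1,O_2$. The two bulleted observations preceding the theorem statement suggest that the hypothesis ``$O_i=[n+1,n+k]$'' can be weakened to the three conditions: (a) both $n+1$ and $n+k$ lie in $O_i$; (b) $O_i$ contains no run of $16$ missing elements; and (c) $O_i+O_i=[2(n+1),2(n+k)]$.

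First I would prove a relaxed version of Lemma \ref{lem:lemma2} under (a)--(c). Walking through the thirteen sum- and eight difference-inclusions in its proof, every term except $L+L+L+O_i$, $O_i+R'+R'+R'$, and $L+L-R'-O_i$ contains at least two copies of some $O_j$; for those, (c) makes $2O_j$ a full interval, and combining with (a) gives $3O_j\supset[3(n+1),3(n+k)]$ and $4O_j\supset[4(n+1),4(n+k)]$, so all the interval inclusions go through verbatim. For the three exceptional terms, the key observations do the work: each of $3L$, $3R'$, and $2L-R'$ contains a run of $16$ consecutive integers (verified once from the explicit base set $A$), and the elementary fact ``$S+T$ is full on $[\min S+\min T,\max S+\max T]$ whenever $S$ has no run of $\ell$ missing and $T$ has a run of $\ell$ consecutive'' then shows (b) is exactly what is needed. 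The condition on $M$ is unchanged: $3O_j$ still covers a full interval of length $3k-2$, so requiring no run of $3k-2$ missing elements in $M$ continues to suffice.

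Next I would count admissible $O_i$. Partition $[n+1,n+k]$ into $k/8$ consecutive blocks of length $8$; any window of $16$ consecutive integers fully contains at least one block, so (b) holds as soon as every block meets $O_i$, which in the uniform model happens with probability $(1-2^{-8})^{k/8}=(255/256)^{k/8}$. Condition (a) costs only a constant factor. For (c), each target $j\in[2(n+1),2(n+k)]$ admits $\Theta(k)$ representations $j=a+b$ with $a,b\in[n+1,n+k]$, each present in $O_i+O_i$ with probability $1/4$, and a union bound over the $O(k)$ targets shows the failure probability is exponentially small in $k$; hence (c) holds with probability $1-o(1)$ given (a)--(b). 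Thus the number of valid $O_i$ is at least a constant multiple of $2^k(255/256)^{k/8}$, and for the independent pair $(O_1,O_2)$ the factor becomes $(255/256)^{k/4}$, in place of the previous $2^{-2k}$. Combining with the block-of-size-$3k/2$ analysis for $M$, the density is bounded below by
\[
\sum_{k=n}^{r/4}\left(\tfrac{255}{256}\right)^{k/4}\left(1-2^{-3k/2}\right)^{r/(3k/2)}.
\]
Writing $(255/256)^{k/4}=2^{-ak}$ with $a=\tfrac14\log_2(256/255)$, this fits the template $\sum 2^{-ak}(1-2^{-bk})^{r/ck}$ treated in Lemma 3.1 of \cite{MOS} with $b=c=3/2$, which yields a lower bound of order $r^{-a/b}=r^{-\log_2(256/255)/6}$, exactly the claimed exponent $\epsilon=\tfrac16\log_2(256/255)$.

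The principal obstacle is the relaxed Lemma \ref{lem:lemma2}: for each of the many sum- and difference-inclusions, one must carefully decide whether the $O$'s contribute via $2O_j$ (invoking (c)) or singly together with one of $3L$, $3R'$, $2L-R'$ (invoking the run-of-$16$ observation with (b)); this is a mechanical but error-prone case analysis. A secondary quantitative point is verifying that (c) is compatible with (a) and (b) without any further exponential cost in $k$, without which the valid-$O$ fraction would drop below $(255/256)^{k/8}$ and the exponent would degrade.
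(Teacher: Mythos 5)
Your proposal follows essentially the same route as the paper: relax the $O_i$ to contain their endpoints, have no run of $16$ missing elements, and have full doubling; count them by blocks of length $8$ to get the factor $(255/256)^{k/4}=2^{-ak}$ with $a=\frac14\log_2(256/255)$; and feed this into Lemma 3.1 of \cite{MOS} with $b=c=3/2$ to obtain the exponent $\frac16\log_2(256/255)$. The one quibble is your claim that fullness of $O_i+O_i$ holds with probability $1-o(1)$ given (a)--(b): targets within $O(1)$ of the ends of $[2(n+1),2(n+k)]$ have only $O(1)$ representations (e.g.\ $2n+3$ has exactly one), so one must additionally force the first and last few elements of each $O_i$ to be present (as the paper does with its ``first $f$ elements'' requirement), a constant-factor cost that leaves the exponent unchanged.
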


\begin{proof}
Instead of requiring that $O_1$ and $O_2$ contain all elements in their ranges as before, we now only require that they contain the first $16$ elements, the last element, and no runs of $16$ consecutive non-chosen in between. While the old $O$'s contributed $1/2^{2k}$ to our sum, the new $O$'s contribute significantly more. For use later in proving Theorem \ref{thm:mainepsilon}, we analyze this problem more generally, and force each $O$ to contain the first $f$ elements, the last element, and no run of $f$ missed elements in between. We again use a crude bound to ensure that each $O$ contains no runs of $f$ blanks and force each $O$ to contain at least one element in every block of $f/2$ elements.

In each $O$ we thus have at most $2k/f$ blocks of length $f/2$. In each block, there are $2^{f/2}$ options and all but one contain at least one element. The fraction of subsets that work as $O$'s is thus at least \be \left(\frac{2^{f/2}-1}{2^{f/2}}\right)^{2k/f}. \ee
To use in our sum, we want to represent this expression as $2^{-\alpha k}$.  We find \bea
\left(\frac{2^{f/2}-1}{2^{f/2}}\right)^{2k/f} & \ = \ & 2^{- \alpha k}\nonumber\\
\frac{2k}{f}\log_2\frac{2^{f/2}-1}{2^{f/2}} &=& - \alpha k\nonumber\\
\alpha &=& \frac{-2}{f}\log_2\frac{2^{f/2}-1}{2^{f/2}}.
\eea
Since, for our current purposes, $f=16$, we find $\alpha=\frac{-1}{8}\log_2\frac{255}{256}$. We know that our sum guarantees a bound of $1/r^{a/b}$, we know from before that $b={3/2}$ and now know that $a=2\alpha \approx 0.00142$ (because there are 2 $O$'s).
Thus there exists some constant $C$ for which the percentage of subsets of $[1,n]$ is greater than $C/n^r$, where $r = \frac16 \log_2(256/255) \approx .001$
\end{proof}

This construction could be pushed further by finding a `better' $A$. If $L+L+L$, $R+R+R$ and $L+L-R$ contained longer runs, we would have more freedom in each $O$, and thus could form a better bound. Rather than look for more sets, however, to allow ourselves to push the bound even further, we modify our construction slightly, and add two more components to our $A'$.

Starting with $A$ as in Theorem \ref{thm:firstgenMOS} (a $P_4^n$ subset of $[1,2n]$ that contains $1$ and $2n$) we form \be A' \ = \ L \cup F_1 \cup O_1 \cup M \cup O_2 \cup F_2 \cup R', \ee where
\begin{itemize}

\item $L \subset[1,n]$ containing $1$,
\item $F_1 = [n+1,n+f]$,
\item $O_1 \subset [n+f+1,n+f+k]$ containing the first $f$ elements, the last element, and no runs of $f$ missing elements,
\item $M \subset [n+f+k+1, n+f+k+m]$ with no runs of $k$ missing elements,
\item $O_2 \subset [n+f+k+m+1, n+f+2k+m]$ containing the first $f$ elements, the last element, and no runs of $f$ blanks,
\item $F_2 = [n+f+2k+m+1, n+2f+2k+m]$,
\item $R' \subset [n+2f+2k+m+1, 2n+2f+2k+m]$ containing $2n+2f+2k+m$.

\end{itemize}

By a method similar to that used in Lemma \ref{lem:lemma2}, we can prove that these $A'$s are $P_4^n$-sets. Since $A$ has $|4A|>|2A-2A|$, we have $|4A'|>|2A'-2A'|$ (by Lemma \ref{lem:lemma1}).

With this new construction, we now prove our best lower bound for the density of sets with $|4A| > |2A-2A|$.

\begin{proof}[Proof of Theorem \ref{thm:mainepsilon}]
For any fixed $f$, we can form \be A'\ =\ L \cup F_1 \cup O_1 \cup M \cup O_2 \cup F_2 \cup R', \ee with $A$ as in the proof of Theorem \ref{thm:run16approxf}. To ensure there are no overly long runs of missing elements, we force each $O$ to contain one element in every block of $f/2$ and $M$ to contain one element in every block of $k/2$.\footnote{We could weaken this construction by appealing to results on the length of consecutive heads in tosses of a fair coin; see Remark \ref{rek:schilling}. As this will not change the form of our final bound, we prefer to keep the exposition simple.}

By allowing $k$ to grow, and summing over our possible sets as we did in Theorems \ref{thm:firstgenMOS} and \ref{thm:run16approxf}, we know that the proportion of subsets $[1,n]$ that have $|4A|>|2A-2A|$ is at least
\be\sum_{k=n}^{r/4} \frac{1}{2^{2\alpha}} \left(1-\frac{1}{2^{k/2}}\right)^\frac{r}{{k}/2}, \ \ \ \alpha\ =\ \frac{-2}{f}\log_2\frac{2^{f/2}-1}{2^{f/2}}. \ee
From Lemma 3.1 of \cite{MOS}, this sum is at least $C/r^p$, with \be p\ = \ \frac{2\alpha}{1/2}\ = \ 4\frac{-2}{f}\log_2\frac{2^{f/2}-1}{2^{f/2}}. \ee
Since \be \lim_{f\to\infty}4\frac{-2}{f}\log_2\frac{2^{f/2}-1}{2^{f/2}} \ =  \ 0, \ee we can force the bound to be better than $C/n^\epsilon$ for any $\epsilon > 0$, completing the proof.
\end{proof}

%%%%%%%%%%%%%%%%%%%%%%%%%%%%%%%%%%%%%%%%%%%%%%%%%%%%%%%%%%%%%%%%%%%%%%%%%%%%%%%%%%%%%%%%%%%%%%%%%%%%%%%%%%%%%%%%%%%%%%%%%%%%%%%%%%%%%%%%%%%%%%%%%%%%%%
%%%%%%%%%%%%%%%%%%%%%%%%%%%%%%%%%%%%%%%%%%%%%%%%%%%%%%%%%%%%%%%%%%%%%%%%%%%%%%%%%%%%%%%%%%%%%%%%%%%%%%%%%%%%%%%%%%%%%%%%%%%%%%%%%%%%%%%%%%%%%%%%%%%%%%
%%%%%%%%%%%%%%%%%%%%%%%%%%%%%%%%%%%%%%%%%%%%%%%%%%%%%%%%%%%%%%%%%%%%%%%%%%%%%%%%%%%%%%%%%%%%%%%%%%%%%%%%%%%%%%%%%%%%%%%%%%%%%%%%%%%%%%%%%%%%%%%%%%%%%%
\section{Constructing Generalized MSTD sets with given differences}\label{sec:givendiffs}

We explore some consequences of our constructions. We first prove that given any $x$ there is an $A$ with $|4A|-|2A-2A| = x$.

\begin{proof}[Proof of Theorem \ref{thm:maindiffisx}]
We first consider negative $x$. Let \be S_x\ =\ [1,|x|+2]\cup\{2|x|+3\}.\ee Then \be 4S_x\ =\ [4, 7|x|+11]\cup\{8|x|+12\}, \ \ \ 2S_x-2S_x=[-4|x|-4,4|x|+4].\ee Thus $|4S_x|-|2S_x-2S_x|=(7|x|+9)-(8|x|+9)=-|x|=x$ as desired.\\

For $x=0$, let $S_0=\{0\}$. Then $|4S_0|-|2S_0-2S_0|=1-1=0$.\\

We are left with positive values of $x$. Similar to Martin and O'Bryant's \cite{MO} proof that $|A+A|-|A-A|$ can equal any value, we deal with certain small values of $x$ explicitly, then offer a method of construction for larger values of $x$. Let \be S_1 \ = \ \{0, 1, 3, 4, 7, 26, 29, 30, 32, 33, 34\}; |S_1+S_1+S_1+S_1|-|S_1+S_1-S_1-S_1| \ = \ 1. \ee
Now consider the positive values of $x\equiv 1 \bmod 4$, so $x=4k+1$. Define \be\label{eq:defnSfourkplusone} S_{4k+1} \ = \ S_1+\{0,137,274,\dots,137k\}. \ee
Then \bea 4S_{4k+1} & \ = \ & \{0 \leq s \leq 137(4k+1) -1: s \not\equiv 23 \bmod 137\} \nonumber\\
 2S_{4k+1}-2S_{4k+1} &= & \{-137(2k+1/2) < s < 137(2k+1/2) : s \not\equiv 43, 231 \}; \eea
Thus $|4S_{4k+1}| - |2S_{4k+1}-2S_{4k+1}| = ((4k+1)\cdot 136) - ((4k+1)\cdot 135) = 4k+1$ as desired.\\

Next we consider the positive values of $x \equiv 0 \bmod 4$. With $S_{4k+1}$ as in \eqref{eq:defnSfourkplusone}, define \be S_{4k}\ =\ S_{4k+1} \backslash \{137\}. \ee After some algebra we find that $2S_{4k}-2S_{4k} = 2S_{4k+1}-2S_{4k+1}$ but  $4S_{4k} = 4S_{4k+1} \backslash \{137\}$. Thus \be |4S_{4k}| - |2S_{4k}-2S_{4k}| \ =  \ |4S_{4k+1}| - |2S_{4k+1}-2S_{4k+1}|-1\ = \ 4k,\ee as desired.\\

Next, we study the positive values of $x \equiv 2 \bmod 4$. Again, with $S_{4k+1}$ as in \eqref{eq:defnSfourkplusone}, define \be S_{4k-6} \ = \ S_{4k+1} \backslash \{34\}. \ee We have \bea 4S_{4k-6} &\ = \ & 4 S_{4k+1} \backslash \{109, 133, 134, 135, 136, 246, 271, 272, 273, 383,409,410,547\}\nonumber\\
2S_{4k-6}-2S_{4k-6} &=& 2S_{4k+1}-2S_{4k+1} \backslash \{-481,-480,-343,343,480,481\}. \eea
Thus \be |4S_{4k-6}| - |2S_{4k-6}-S_{4k-6}|=|4S_{4k+1}| - |2S_{4k+1}-S_{4k+1}|+13-6 \ = \ 4k+1-7 \ = \ 4k-6, \ee as desired.\\

Finally, the take care of the positive values of $x \equiv 3 \bmod 4$. Here we set \be S_{4k-1} \ = \ S_{4k+1} \backslash \{33\}, \ee where as always $S_{4k+1}$ is as in \eqref{eq:defnSfourkplusone}. We have \bea 4S_{4k-1} &\ =\ & 4S_{4k+1} \backslash \{133, 135\} \nonumber\\ 2S_{4k-1}-2S_{4k-1} &=& 2S_{4k+1}-2S_{4k+1}. \eea
Thus \be |4S_{4k-1}| - |2S_{4k-1}-S_{4k-1}| \ =\ |4S_{4k+1}| - |2S_{4k+1}-S_{4k+1}|-2 \ = \ 4k-1, \ee completing the proof.
\end{proof}

\ \\

\begin{thm}\label{thm:minimumspan}
The minimum span for any set with $|4A|>|2A-2A|$ is 30.
\end{thm}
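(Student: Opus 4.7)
The plan is to prove the claim in two parts: an explicit construction showing that span $30$ is achievable, and a verification that no smaller span suffices. Here by span of $A$ I mean $\max(A)-\min(A)$; since translation leaves $|4A|$ and $|2A-2A|$ unchanged, we may always normalize so that $\min A = 0$.

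For the upper bound, I would exhibit a specific finite set $A\subset [0,30]$ with $0,30\in A$ and verify by direct enumeration that $|4A| > |2A-2A|$. The paper already produced an $11$-element example of span $34$, namely $\{6,7,9,10,13,32,35,36,38,39,40\}$, and in \S\ref{sec:givendiffs} the small ``seed'' set $S_1 = \{0,1,3,4,7,26,29,30,32,33,34\}$ of span $34$ is shown to satisfy $|4S_1|-|2S_1-2S_1|=1$. The strategy is to search for a span-$30$ refinement of such a set — for instance by contracting the central gap or by adjusting a handful of the near-boundary elements — and then verify by computing $4A$ and $2A-2A$ directly that the strict inequality holds. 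This is a finite check of one specific set.

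For the lower bound, I would show that no set of span at most $29$ can have $|4A|>|2A-2A|$. After normalizing $\min A = 0$, $\max A = s$, and identifying $A$ with its reflection $s - A$ (which preserves both $|4A|$ and $|2A-2A|$), one is reduced to checking, for each $s\in\{0,1,\dots,29\}$, the subsets of $[0,s]$ containing both endpoints. This amounts to at most $\sum_{s=0}^{29} 2^{s-1}<2^{29}$ sets, which is well within reach of an exhaustive computer search. For each candidate $A$ one computes the multisets of quadruple sums and of $2A-2A$, extracts the underlying sets, and verifies $|4A|\le |2A-2A|$.

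The main obstacle is the lower bound. A purely structural proof is tempting: both $4A$ and $2A-2A$ sit inside intervals of the same length $4s+1$, while the symmetry of $2A-2A$ about $0$ and the ``central piling up'' of $4A$ make $|2A-2A|\ge |4A|$ the generic outcome for small $s$. Turning this heuristic into a span-by-span proof appears to require substantial case analysis split by $|A|$ and by the arithmetic of the gaps, so I would present the exhaustive computational search as the clean route, and reserve structural arguments only for pruning (e.g.\ trivially discarding sets with $|A|\le 3$ where both quantities are tiny, or sets that are arithmetic progressions where $|4A|=4s+1 = |2A-2A|$).
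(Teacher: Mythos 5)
Your proposal is correct and takes essentially the same approach as the paper: an exhaustive computer search over all sets of span at most $29$ (the paper phrases this as checking all subsets of $[1,30]$, noting $2^{29}<10^9$) for the lower bound, plus an explicit span-$30$ example for the upper bound, for which the paper supplies $A=\{1,2,3,5,9,24,28,30,31\}$. The only thing missing from your write-up is the concrete witness set itself, which your finite search would produce.
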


\begin{proof}
There are no subsets of $[1,30]$ with $|4A|>|2A-2A|$, which can be checked by brute force in a reasonable amount of time as $2^{29} < 10^9$. Thus the minimum span cannot be less than 29. As  $A = \{1, 2, 3, 5, 9, 24, 28, 30, 31\}$ has $|4A| > |2A-2A|$, the minimum span must be 30.
\end{proof}

\ \\

\begin{thebibliography}{KonS8}

%\bibitem{} % '2nd argument contains the widest acronym'

\bibitem[FP]{FP}
G. A. Freiman and V. P. Pigarev, \emph{The relation between the invariants R and T}, Number theoretic studies in the Markov spectrum and in the structural theory of set addition (Russian),
Kalinin. Gos. Univ., Moscow, 1973, 172--174.

\bibitem[He]{He}
P. V. Hegarty, \emph{Some explicit constructions of sets with more sums than differences} (2007), Acta Arithmetica \textbf{130} (2007), no. 1, 61--77.

\bibitem[HM1]{HM1}
P. V. Hegarty and S. J. Miller, \emph{When almost all sets are difference dominated}, Random Structures and Algorithms \textbf{35} (2009), no. 1, 118--136. 

\bibitem[HM2]{HM2}
P. V. Hegarty and S. J. Miller, Appendix 2 of \emph{Explicit constructions of infinite families of MSTD sets} (by S. J. Miller and D. Scheinerman), Additive Number Theory: Festschrift In Honor of the Sixtieth Birthday of Melvyn B. Nathanson (David Chudnovsky and Gregory Chudnovsky, editors), Springer-Verlag, 2010.

\bibitem[ILMZ1]{ILMZ1}
G. Iyer, O. Lazarev, S. J. Miller and L. Zhang, \emph{Finding and Counting MSTD sets}, to appear in the conference proceedings of the 2011 Combinatorial and Additive Number Theory Conference, \texttt{http://arxiv.org/abs/1107.2719}.

\bibitem[ILMZ2]{ILMZ2}
G. Iyer, O. Lazarev, S. J. Miller and L. Zhang, \emph{Generalized More Sums Than Differences Sets}, to appear in the Journal of Number Theory, doi:10.1016/j.jnt.2011.10.006.

\bibitem[Ma]{Ma}
J. Marica, \emph{On a conjecture of Conway}, Canad. Math. Bull. \textbf{12} (1969), 233--234.

\bibitem[MO]{MO}
G. Martin and K. O'Bryant, \emph{Many sets have more sums than differences}, in Additive Combinatorics, CRM Proc. Lecture Notes, vol. 43, Amer. Math. Soc., Providence, RI, 2007, pp. 287--305.

\bibitem[MOS]{MOS}
S. J. Miller, B. Orosz and D. Scheinerman, \emph{Explicit constructions of infinite families of MSTD sets}, Journal of Number Theory \textbf{130} (2010) 1221--1233.

%\bibitem[MPR]{MPR}
%S. J. Miller, S. Pegado and S. L. Robinson, \emph{Explicit constructions of infinite families of generalized MSTD sets}, %preprint.

\bibitem[Na1]{Na1}
M. Nathanson, \emph{Additive Number Theory: The Classical Bases},
Graduate Texts in Mathematics, Springer-Verlag, New York, $1996$.

\bibitem[Na2]{Na2}
M. B. Nathanson, \emph{Problems in additive number theory, 1},
Additive combinatorics, 263--270, CRM Proc. Lecture Notes \textbf{43},
Amer. Math. Soc., Providence, RI, 2007.

\bibitem[Na3]{Na3}
M. B. Nathanson, \emph{Sets with more sums than differences},
Integers : Electronic Journal of Combinatorial Number Theory \textbf{7} (2007), Paper A5 (24pp).

\bibitem[Ru1]{Ru1}
I. Z. Ruzsa, \emph{On the cardinality of $A + A$ and $A - A$}, Combinatorics year (Keszthely, 1976), vol. 18, Coll. Math. Soc. J. Bolyai, North-Holland-Bolyai T$\grave{{\rm a}}$rsulat, 1978, 933--938.

\bibitem[Ru2]{Ru2}
I. Z. Ruzsa, \emph{Sets of sums and differences}, S$\acute{{\rm e}}$minaire de Th$\acute{{\rm e}}$orie des Nombres de Paris 1982-1983 (Boston), Birkh$\ddot{{\r a}}$user, 1984, 267--273.

\bibitem[Ru3]{Ru3}
I. Z. Ruzsa, \emph{On the number of sums and differences}, Acta Math. Sci. Hungar. \textbf{59} (1992), 439--447.

\bibitem[Sc]{Sc}
M. F. Schilling, \emph{The longest run of heads}, The College Mathematics Journal \textbf{21} (1990), no. 3, 196--207.

%\bibitem[TW]{TW}
%R. Taylor and A. Wiles, \emph{Ring-theoretic properties of certain
%Hecke algebras}, Ann. Math. \textbf{141} (1995), 553--572.

%\bibitem[Vu1]{Vu1}
%V. H. Vu, \emph{New bounds on nearly perfect matchings of
%hypergraphs: Higher codegrees do help}, Random Structures and
%Algorithms \textbf{17} (2000), 29--63.

%\bibitem[Vu2]{Vu2}
%V. H. Vu, \emph{Concentration of non-Lipschitz functions and
%Applications}, Random Structures and Algorithms \textbf{20} (2002),
%no. 3, 262-316.

\bibitem[Zh1]{Zh1}
Y. Zhao, \emph{Constructing MSTD Sets Using Bidirectional Ballot Sequences}, Journal of Number Theory \textbf{130} (2010), no. 5, 1212--1220.

\bibitem[Zh2]{Zh2}
Y. Zhao, \emph{Sets Characterized by Missing Sums and Differences}, Journal of Number Theory \textbf{131} (2011), 2107--2134. 

\end{thebibliography}
\end{document}